\renewcommand{\phi}{\varphi}
\renewcommand{\epsilon}{\varepsilon}
\newcommand{\F}{\mathbf{F}}
\newcommand{\N}{\mathbf{N}}
\newcommand{\R}{\mathbf{R}}
\newcommand{\Z}{\mathbf{Z}}
\newcommand{\e}{\mathrm{e}} 
\newcommand{\s}[2]{\genfrac{\{}{\}}{0pt}{}{#1}{#2}} 
\newcommand{\GH}{{L^2(\R^d, \dx\gamma)}}
\newcommand{\D}{\,\mathrm{d}}
\newcommand{\dx}{\mathrm{d}}
\newtheorem{theorem}{Theorem}
\newtheorem{lemma}{Lemma}
\newtheorem{definition}{Definition}
\begin{document}

\title{On the integral kernels
of derivatives of the Ornstein-Uhlenbeck semigroup}

\author{Jonas Teuwen}

\address{Delft Institute of Applied Mathematics,
   Delft University of Technology\\ P.O. Box 5031\\ 2600 GA Delft\\ The
   Netherlands\\
j.j.b.teuwen@tudelft.nl}

\maketitle

\begin{abstract}
This paper presents a closed-form expression for the integral kernels associated with the
 derivatives of the
 Ornstein-Uhlenbeck semigroup $\e^{tL}$. Our approach is to expand the Mehler
 kernel into Hermite polynomials and applying the powers $L^N$ of the
 Ornstein-Uhlenbeck operator to it, where we exploit the fact
 that the Hermite polynomials are eigenfunctions for $L$. As an application we give an
 alternative proof of the kernel estimates by \cite{Portal2014}, making all
 relevant quantities explicit.
\end{abstract}


\maketitle
\section{Introduction}
Much effort \cite{Harboure2000,Kemppainen2015,Maas2010b,MaasNeervenPortal2011,MauceriMeda2007,MauceriMeda2012,Muckenhaupt,Pineda2008,Portal2014,Sjogren1997,Teuwen2015} has gone into developing the harmonic analysis of the {\em Ornstein-Uhlenbeck operator}
\begin{equation}
 \label{eq:OU-operator}
 L := \frac12 \Delta - \langle x, \nabla \rangle.
\end{equation}
On the space $L^2(\R^d, \dx\gamma)$,
where $\gamma$ is the Gaussian measure
\begin{equation}
 \label{eq:Gaussian-measure}
 \dx\gamma(x) := \pi^{-d/2} \e^{-|x|^2} \dx{x},
\end{equation}
this operator can be viewed as the Gaussian counterpart of the Laplace operator $\Delta$.
Indeed, one has $L = -\nabla^*\nabla$, where $\nabla$ is the the usual gradient
and the $\nabla^*$ is its adjoint in $L^2(\R^d, \dx\gamma)$.
It is a classical fact that the semigroup operators $e^{tL}$, $t>0$, are integral operators of the form
\begin{equation*}
  \e^{tL} u(\cdot) = \int_{\R^d} M_t(\cdot, y) u(y) \D\gamma(y),
\end{equation*}
where $M_t$ is the so-called {\em Mehler kernel} \cite{Sjogren1997}
\begin{equation}
  \label{eq:Mehler-kernel-oneformula}
  \begin{aligned}
  M_t(x, y)
  & = \frac{\exp\Bigl(\displaystyle-\frac{|\e^{-t}x - y|^2}{1 - \e^{-2t}}
    \Bigr)}{(1 - \e^{-2t})^{d/2}} \e^{|y|^2}
  \end{aligned}
\end{equation}
(see \cite{Teuwen2015} for a representation of $M_t$ which makes the symmetry in $x$ and $y$
explicit).

Developing a Hardy space theory for $L$ is the subject of active
current research \cite{Portal2014,MauceriMeda2007}.
In this theory the derivatives $(d^k/dt^k)e^{t L} = L^N e^{tL}$ play an important role.
The aim of the present paper is to derive closed form expressions for the integral
kernels of these derivatives, that is, to determine explicitly the kernels $M_t^N$ such that we have the identity
\begin{equation}\label{eq:Mehler}
  L^N\e^{tL} u(\cdot)  = \int_{\R^d} M_t^N(\cdot, y) u(y) \D\gamma(y).
\end{equation}
Direct application of the derivatives $d^N/dt^N$ to the Mehler kernel yields expressions which become
intractible even for small values of $N$.
Our approach will be to expand the Mehler kernel in terms of the $L^2$-normalised Hermite polynomials and then
to apply $L^N$ to it, thus exploiting the fact that the Hermite polynomials are eigenfunctions for $L$.

As an application of our main result, which is proved in section \ref{sec:mainresult}
after developing some preliminary material in the
sections \ref{sec:prelim}-\ref{sec:Weyl},
we shall give a direct proof for the kernel bounds of \cite{Portal2014} in section \ref{sec:application}.

\section{Preliminaries}\label{sec:prelim}

In this preliminary section we collect some standard properties of
Hermite polynomials and their relationship with the Ornstein-Uhlenbeck operator.
Most of this material is classical and can be found in \cite{Sjogren1997,MR1215939}.

\subsection{Hermite polynomials}
The {\em Hermite polynomials} $H_n$, $n\ge 0$, are
defined by Rodrigues's formula
\begin{equation}
  \label{eq:Hermite-Rodrigues}
  H_n(x) := (-1)^n \e^{x^2} \partial_x^n \e^{-x^2}.
\end{equation}
Their $L^2$-normalizations,
\[ h_n := \frac{H_n}{\sqrt{2^{n} n!}}\]
 form an orthonormal basis for $L^2(\R,\dx\gamma)$.
We shall use the fact that the generating function for the Hermite polynomials is given by
\begin{equation}
 \label{eq:Generating-function-identity} \sum_{n = 0}^\infty \frac{H_n(x)}{n!} t^n =\e^{2 tx - t^2}.
\end{equation}
The relationship with the Ornstein-Uhlenbeck operator is encoded in the eigenvalue identity
$L H_n = -n H_n$, from which it follows that for all $t \ge 0$ we have $\e^{tL} H_n =
\e^{-t n} H_n$. From this
one quickly deduces that the Mehler kernel is given by
\begin{equation}
  \label{eq:Mehler_kernel_non_comp}
  M_t(x, y) := \sum_{n = 0}^\infty \e^{-t n} h_n(x)h_n(y).
\end{equation}
We will need two further identities for the Hermite polynomials
which can be found, e.g., in \cite[Chapter 18]{NIST:DLMF}: the integral representation
\begin{equation}
  \label{eq:Hermite-integral}
  H_n(x) = \frac{(-2i)^n \e^{x^2}}{\sqrt\pi} \int_{-\infty}^\infty
  \xi^n \e^{2ix \xi} \e^{-\xi^2} \D\xi
\end{equation}
and the ``binomial'' identity
\begin{equation}
  \label{eq:Hermite-binomial-type}
  H_n(x + y) = \sum_{k = 0}^n \binom{n}{k} (2y)^{n - k} H_k(x) .
\end{equation}

\subsection{Hermite polynomials in several variables}
The Hermite polynomials on $\R^d$ are defined, for
 multiindices
$\alpha = (\alpha_1, \dots, \alpha_d)\in\N^d$ by the tensor extensions
\begin{equation}
  \label{eq:Hermite-Rodrigues-Rd}
  H_\alpha(x) := \prod_{n = 1}^{d} h_{\alpha_i}(x_i),
\end{equation}
for $x=(x_1,\dots,x_d)$ in $\R^d$.
The normalized Hermite polynomials
\begin{equation}
  \label{eq:Hermite-normalized}
  h_\alpha := \frac{H_\alpha}{\sqrt{2^{|\alpha|} \alpha!}}, \  \text{  where } \alpha! = \alpha_1! \cdot \dots \cdot \alpha_d!
\end{equation}
form an
orthonormal basis in $\GH$, and we have the eigenvalue identity
\begin{equation}
  \label{eq:OU-Hermite-action}
  LH_{\alpha} = -|\alpha| H_\alpha, \ \text{  where } |\alpha| = \alpha_1 + \dots + \alpha_d.
\end{equation}
If we consider the action of $L^N \e^{tL}$ on a Hermite
polynomial $h_\alpha$,  through the multinomial theorem applied to $|\alpha|^k$ we get
(writing $L_d$ for the operator $L$ in dimension $d$ and $L_1$ for the operator $L$ in dimension $1$)
\begin{equation}
  \label{eq:reduction-to-d1}
\begin{aligned}  L_d^N \e^{tL} & h_\alpha(x) = |\alpha|^N \e^{-t|\alpha|}
                            h_{\alpha_1}(x_1) \cdot \hdots \cdot
                            h_{\alpha_d}(x_d)\\
&= \sum_{|n| = N} \binom{N}{n_1, n_2, \hdots, n_d} \alpha_1^{n_1}
    \cdot \hdots \cdot \alpha_d^{n_d} \e^{-t\alpha_1}\cdot\hdots\cdot \e^{\alpha_d}
                            h_{\alpha_1}(x_1) \cdot \hdots\cdot
                            h_{\alpha_d}(x_d)
\\ & = \sum_{|n| = N} \binom{N}{n_1, n_2, \hdots, n_d} L_1^{n_1}
      \e^{tL_1} h_{\alpha_1}(x_1) \cdot \hdots \cdot L_1^{n_d}
      \e^{tL_1} h_{\alpha_d}(x_d).
\end{aligned}
\end{equation}
This implies that we can reduce the question of computing the
$d$-dimensional version of the integral kernel to the one-dimension one.

\section{A combinatorial lemma}\label{sec:setup}
From now on we concentrate on the Ornstein-Uhlenbeck operator $L$ in one dimension, i.e., in $L^2(\R,\dx\gamma)$.
We are going to follow the approach of \cite{Sjogren1997}.
Recalling the identity $L h_n = -n h_n$, we will
apply $L^N$ to the generating function of the Hermite polynomials
\eqref{eq:Generating-function-identity}. A problem which
immediately occurs is that $\Delta$ and $\langle x, \nabla \rangle$ do not
commute, and because of this we cannot use a standard binomial formula
to evaluate $L^N$. Instead, we note that
\[L g = -t \partial_t g.\] In particular this implies that
\[L^N g = (-1)^k D_N g,\] where
\begin{equation}
  \label{eq:Differential-operator-generated}
  D_N := \underbrace{t \partial_t \circ t \partial_t \circ \dots \circ t \partial_t}_{\text{$k$ times}} = (t\partial_t)^k.
\end{equation}
The following lemma will be very useful.
\begin{lemma}\label{lem:Lk-powers-expanded}
 We have
  \begin{equation}
    \label{eq:Expanded-Differentiatial-operator-generated}
    D_N = \sum_{n = 0}^N \s{N}{n} t^n \partial_t^n, 
  \end{equation}
  where $\s{N}{n}$ are the Stirling numbers of the second kind.
\end{lemma}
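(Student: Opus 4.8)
The plan is to prove \eqref{eq:Expanded-Differentiatial-operator-generated} by induction on $N$, the whole argument being driven by a single computation: the action of the operator $t\partial_t$ on a monomial operator of the form $t^n\partial_t^n$. The point is that applying $t\partial_t$ to such a term produces exactly two terms of the same shape, and the way their coefficients combine reproduces the defining recurrence of the Stirling numbers of the second kind.

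For the base case I take $N=1$, where $D_1 = t\partial_t = \s{1}{1}\,t\partial_t$ agrees with the claim since $\s{1}{1}=1$ and $\s{1}{0}=0$; one may equally start from $N=0$ with $D_0=\mathrm{id}=\s{0}{0}$. The key computation internal to the lemma is that, by the product rule,
\[
  t\partial_t\bigl(t^n\partial_t^n\bigr) = t\bigl(n\,t^{n-1}\partial_t^n + t^n\partial_t^{n+1}\bigr) = n\,t^n\partial_t^n + t^{n+1}\partial_t^{n+1}.
\]

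For the inductive step I assume \eqref{eq:Expanded-Differentiatial-operator-generated} for a given $N$ and apply $t\partial_t$ termwise to $D_N$, using the displayed identity on each summand. This yields
\[
  D_{N+1} = t\partial_t \circ D_N = \sum_{n=0}^N \s{N}{n}\bigl(n\,t^n\partial_t^n + t^{n+1}\partial_t^{n+1}\bigr).
\]
Re-indexing the second group of terms and collecting the coefficient of $t^n\partial_t^n$ leaves $n\,\s{N}{n} + \s{N}{n-1}$. The crux of the argument — and the step I would be most careful to state correctly — is to recognise this as the classical recurrence $\s{N+1}{n} = n\,\s{N}{n} + \s{N}{n-1}$ for the Stirling numbers of the second kind, after which the sum collapses to $\sum_{n=0}^{N+1}\s{N+1}{n}\,t^n\partial_t^n$ and the induction closes. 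I do not anticipate a genuine obstacle: the computation is elementary, and the only bookkeeping to watch is at the endpoints $n=0$ and $n=N+1$, where the conventions $\s{N}{0}=0$ for $N\ge 1$ and $\s{N}{N+1}=0$ guarantee that the boundary terms vanish as required.

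As a cross-check, and an alternative route avoiding the recurrence, I would test both sides of \eqref{eq:Expanded-Differentiatial-operator-generated} on the monomials $t^m$, which span the polynomials and hence determine any differential operator with polynomial coefficients. Since $t\partial_t\,t^m = m\,t^m$ gives $D_N t^m = m^N t^m$, while $t^n\partial_t^n\,t^m = m(m-1)\cdots(m-n+1)\,t^m$, the operator identity reduces to the well-known expansion $m^N = \sum_{n=0}^N \s{N}{n}\,m(m-1)\cdots(m-n+1)$ of ordinary powers in falling factorials, one of the standard characterisations of $\s{N}{n}$.
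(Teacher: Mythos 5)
Your proof is correct, and it takes a genuinely different route from the paper's. The paper does not induct at all: it abstracts the statement to the Weyl algebra $\F\langle x,y\rangle$ with $[x,y]=-1$ (lemma \ref{lem:Weyl-expansion}), first arguing structurally that $(xy)^N$ is \emph{homogeneity preserving} and therefore must have the form $\sum_{i} a_i^N x^i y^i$, and then pinning down the coefficients by right-multiplying by $x^j$ and comparing constant terms, which produces $j^N=\sum_i a_i^N\,(j)_i$ and hence $a_i^N=\s{N}{i}$ via the falling-factorial identity \eqref{eq:Stirling-numbers-second-kind-generating-function-1}; no recurrence is ever used. Your induction via $\s{N+1}{n}=n\s{N}{n}+\s{N}{n-1}$ is shorter and more elementary, at the price of needing to know the answer in advance, whereas the paper's two-step argument (shape first, coefficients second) derives the Stirling numbers rather than verifying them, and yields the identity in any algebra with $[x,y]=-1$. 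Your cross-check is, in fact, the concrete avatar of the paper's identification step: testing both sides on $t^m$ gives $m^N=\sum_n\s{N}{n}\,(m)_n$, which is exactly what the paper extracts abstractly by right multiplication by $x^j$, so that remark alone would already constitute a complete second proof (the determination of a differential operator with polynomial coefficients by its action on monomials is easily justified by the triangular argument you sketch). One caution: the recurrence you invoke is the correct one, but as printed in the paper, \eqref{eq:Stirling-numbers-second-kind-recursion} reads $\s{N}{n}=N\s{N-1}{n}+\s{N-1}{n-1}$, where the factor $N$ should be $n$; had you cited that display verbatim, your inductive step would not close, so the discrepancy is worth flagging.
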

The Stirling numbers of the second kind are quite well-known combinatorial objects.
For the sake of completeness we will state their definition and
recall some relevant properties below. For more information we refer the reader to \cite{LINT}.
The related Stirling numbers of the first kind will not be needed here.

We begin by recalling the definition of {\em falling factorial}
\begin{equation}
  \label{eq:falling-factorial}
  (j)_n := j(j - 1)\dots (j - n + 1) = \frac{j!}{(j - n)!},
\end{equation}
for non-negative integers $k \geq n$.
\begin{definition}\label{def:stirling-numbers-second-kind}
For non-negative integers $N \ge n$, the number {\em Stirling number of the second kind} $\s{N}{n}$
is defined as the number of partitions of an $N$-set into
  $n$ non-empty subsets.
  \end{definition}
These numbers satisfy the recursion identity
  \begin{equation}
    \label{eq:Stirling-numbers-second-kind-recursion}
    \s{N}{n} = N \s{N - 1}{n} + \s{N - 1}{n - 1}.
  \end{equation}
  For all non-negative integers $j$ and $k$ one has the generating function identity
  \begin{equation}
    \label{eq:Stirling-numbers-second-kind-generating-function-1}
    j^N = \sum_{n = 0}^N \s{N}{n} (j)_n.
  \end{equation}

\section{Weyl Polynomials}\label{sec:Weyl}
Before turning to the proof of lemma
\ref{lem:Lk-powers-expanded}, let us already mention that it only depends
on the commutator identity $[t,\partial_t] = -1$.
This brings us to the
observation that \emph{Weyl polynomials} provide the natural
habitat for our expressions.
Rougly speaking, a Weyl polynomial is
a polynomial in two non-commuting variables $x$ and $y$
which satisfy the commutator identity $[x, y] = -1$.
This is made more precise in the following definition.

\begin{definition}
The {\em Weyl algebra} over a field $\F$ of characteristic zero is the
ring $\F\langle x,y\rangle$ of all polynomials of the form
$p(x,y) = \sum_{m=0}^M \sum_{n=0}^N c_{mn} x^m y^n$ with coefficients $c_{mn}\in\F$
in two noncommuting variables $x$ and $y$ which
satisfy the commutator identity \[[x,y]:=xy-yx = -1.\]
\end{definition}

We now have the following abstract version of lemma \ref{lem:Lk-powers-expanded}:
\begin{lemma}\label{lem:Weyl-expansion}
  In the Weyl algebra $\F \langle x, y\rangle$ we have the identity
  \begin{equation}
    \label{eq:Weyl-base-thm}
    (x y)^m = \sum_{i = 1}^m \s{m}{i} x^i y^i,
  \end{equation}
where $\s{m}{i}$ are the Stirling numbers of the second kind.
\end{lemma}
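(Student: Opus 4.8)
The plan is to prove the identity $(xy)^m = \sum_{i=1}^m \s{m}{i} x^i y^i$ by induction on $m$, using the Stirling recursion \eqref{eq:Stirling-numbers-second-kind-recursion} as the combinatorial engine and the commutator relation $[x,y] = -1$ as the only algebraic input. The base case $m=1$ is immediate since $\s{1}{1} = 1$, giving $xy = xy$. The inductive step is where all the work lies: assuming the formula for $m$, I would write $(xy)^{m+1} = (xy)(xy)^m = \sum_{i=1}^m \s{m}{i}\, x y\, x^i y^i$ and then push the single factor $xy$ through to the standard form $x^j y^j$.

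The key computation will be moving $y$ past $x^i$. From $[x,y] = xy - yx = -1$ we get $yx = xy + 1$, and more generally I expect the commutation rule $y\, x^i = x^i y + i\, x^{i-1}$, which I would establish by a short auxiliary induction on $i$ (each time $y$ hops over one more copy of $x$, it picks up a $+1$, and collecting these gives the factor $i$). Substituting this into $xy\cdot x^i y^i = x(yx^i)y^i$ yields
\begin{equation}
  x y\, x^i y^i = x\bigl(x^i y + i x^{i-1}\bigr) y^i = x^{i+1} y^{i+1} + i\, x^i y^i.
\end{equation}
Therefore each term $\s{m}{i} xy\,x^i y^i$ splits into $\s{m}{i} x^{i+1}y^{i+1} + i\,\s{m}{i} x^i y^i$, so summing over $i$ produces a sum whose coefficient of $x^j y^j$ is exactly $\s{m}{j-1} + j\,\s{m}{j}$.

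The final step is to recognize that $\s{m}{j-1} + j\,\s{m}{j}$ is precisely $\s{m+1}{j}$ by the Stirling recursion \eqref{eq:Stirling-numbers-second-kind-recursion} (with indices shifted to $N = m+1$, $n = j$), which closes the induction. I expect the main obstacle to be purely bookkeeping: keeping the index shifts straight when collecting the two contributions into a single sum $\sum_{j=1}^{m+1}\s{m+1}{j} x^j y^j$, and handling the boundary terms $j=1$ and $j=m+1$ correctly (where one of the two Stirling numbers vanishes). Once Lemma \ref{lem:Weyl-expansion} is proved in this abstract setting, Lemma \ref{lem:Lk-powers-expanded} follows immediately by specializing $x = t$, $y = \partial_t$, since $[t, \partial_t] = -1$ as noted, with $(xy)^m = (t\partial_t)^N = D_N$.
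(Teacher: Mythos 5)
Your proof is correct, but it takes a genuinely different route from the paper's. The paper first proves a structural fact: left multiplication by $xy$ preserves the weighted degree of monomials, so $(xy)^m$ must already have the form $\sum_{i} a_i^m x^i y^i$ for some scalars $a_i^m$. It then identifies the coefficients by multiplying on the right by $x^j$, pushing $x^j$ through via $p(xy)\,x^j = x^j\,p(xy+j)$ and $x^iy^ix^j = x^j\prod_{\ell=0}^{i-1}(xy-\ell+j)$, and comparing constant terms to get $j^m = \sum_i a_i^m (j)_i$; the falling-factorial generating identity \eqref{eq:Stirling-numbers-second-kind-generating-function-1} then forces $a_i^m = \s{m}{i}$. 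You instead argue by direct induction on $m$. Amusingly, your key one-step computation $xy\cdot x^iy^i = x^{i+1}y^{i+1} + i\,x^iy^i$ is exactly the computation the paper performs (via \eqref{eq:Weyl-push-1}) to establish homogeneity preservation — but you use it to close an induction with the Stirling recursion, while the paper uses it only to justify the ansatz and then determines the coefficients by an evaluation argument. Your route is more elementary and self-contained: it needs nothing about Stirling numbers beyond their recursion and initial values, and the boundary terms $j=1$, $j=m+1$ are handled by the vanishing conventions exactly as you say. The paper's route buys something different: it ``discovers'' the Stirling numbers from the identity $j^m = \sum_i a_i^m (j)_i$ rather than verifying a formula guessed in advance, and the linear independence of the falling factorials gives uniqueness of the coefficients for free.

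One caution about your citation. The recursion as printed in the paper, \eqref{eq:Stirling-numbers-second-kind-recursion}, reads $\s{N}{n} = N\s{N-1}{n} + \s{N-1}{n-1}$; this is a misprint — the correct recursion has coefficient $n$, namely $\s{N}{n} = n\s{N-1}{n} + \s{N-1}{n-1}$. Your algebra produces the coefficient $\s{m}{j-1} + j\,\s{m}{j}$, which matches the \emph{correct} recursion, so your proof is sound (and in fact the Weyl-algebra computation independently re-derives the correct form). But if you applied the printed formula literally with $N = m+1$, $n = j$, you would need $\s{m}{j-1} + (m+1)\s{m}{j}$ and the induction would not close; so state the recursion you actually use rather than citing \eqref{eq:Stirling-numbers-second-kind-recursion} verbatim.
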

As a preparation for the proof of lemma \ref{lem:Weyl-expansion} we make a couple
of easy computations. If we set $D := xy$, then
\begin{align}
  D x^m &= x^m D + m x^m\label{eq:Weyl-push-1},\\
  D y^m &= y^m D - m y^m\label{eq:Weyl-push-2}.
\end{align}
This can be shown by induction on $m$. For instance, note that
\begin{equation*}
  D x^m = x(D + 1)x^{m - 1} = x D x^{m - 1} + x^m.
\end{equation*}
If we take this a bit further and have $p \in \F[D]$, then
\begin{align}
  \label{eq:Weyl-poly-1}
  p(D) x^m &= x^m p(D + m),\\
  \label{eq:Weyl-poly-2}
  p(D) y^m &= y^m p(D - m).
\end{align}
The $m$-th powers, $m\ge 1$, of $x$ and $y$ satisfy
\begin{align}
  \label{eq:Weyl-xmym}
  x^m y^m &= \prod_{i = 0}^{m - 1} (D - i),\\
  \label{eq:Weyl-ymxm}
  y^m x^m &= \prod_{i = 1}^m (D + i).
\end{align}
This can be seen using induction:
\begin{equation*}
  x^{m + 1} y^{m + 1} = x^m D y^m \overset{\eqref{eq:Weyl-poly-2}}{=} x^m y^m (D - m)
\end{equation*}
and
\begin{equation*}
  y^{m + 1} x^{m + 1} = y^m (D + 1) x^m = y^m D x^m + y^m x^m \overset{\eqref{eq:Weyl-poly-1}}{=} y^m x^m (D + (m + 1)).
\end{equation*}
\begin{definition}
  The {\em weighted degree} of a monomial $x^m y^n\in \F\langle x,y\rangle$ is the integer $m-n$.
  A polynomial in $\F\langle x,y \rangle$ is said to be {\em homogeneous of weighted degree $j$}
  if all its constituting monomials have weighted degree $j$.
\end{definition}
Left multiplication by $xy$ is {\em homogeneity preserving}, i.e., for all $j\in\Z$
it maps the set of homogeneous monomials of weighted degree $j$ into itself. To prove this,
first consider a monomial $x^m y^n$ of dweighted egree $j = m-n$. Then,
\begin{equation*}
  (xy) x^m y^n \mathrel{\overset{\eqref{eq:Weyl-push-1}}{=}} (x^m (xy) + m x^m)y^n = x^m(xy)y^n +mx^m y^n = x^{m + 1} y^{n + 1} + m x^m y^n,
\end{equation*}
and we see that weighted degree of homogeneity is indeed preserved. The general case follows immediately.
Through \eqref{eq:Weyl-xmym} we conclude that left multiplication
$x^k y^k$  is homogeneity preserving as well, for all non-negative
integers $k$. We claim that left multiplication by $x^i y^j$ is homogeneity preserving
only if $i = j$. To see this note that \[y x^i y^j = x^i y^{i + j} + i
x^{i - 1} y^j\] from which we can deduce that
\begin{equation*}
  x^m y^M x^n y^N = x^{n + m} y^{N + M} + \text{ lower order terms}.
\end{equation*}
From which the claim follows.

Finally, a polynomial is homogeneity preserving if and only if all of
its constituting monomials have this property. If this were not to be the
case we could look at the highest-order non-homogeneous term and note
from above $x^m y^M x^n y^N$ would give terms of a lower order in the polynomial expansion which
cannot cancel as they have different powers of $x$ or $y$.

It follows from these observations that
\begin{equation}
  \label{eq:Weyl-G0}
  F_0 := \biggl\{\sum_{n=0}^N c_{n} x^n y^n \ \bigg| \ N\in\N, \, c_1,\dots,c_N\in\F \biggr\}
\end{equation}
is precisely the set of \emph{homogeneity preserving polynomials} in $\F\langle
x, y\rangle$.

Now everything is in place to give the proof of lemma
\ref{lem:Weyl-expansion}.

\begin{proof}[Proof of lemma
\ref{lem:Weyl-expansion}] As $(xy)^k$ is homogeneity preserving,
we infer that there are coefficients $a_i^k$ in $\F$ such
that
\begin{equation}
  \label{eq:Weyl-lemma-intermediate-step-wanted-form}
  (xy)^k = \sum_{i = 0}^k a_i^k x^i y^i.
\end{equation}
We will apply $x^j$ to the right on both sides of
\eqref{eq:Weyl-lemma-intermediate-step-wanted-form} and derive an
expression for the $a_i^k$. First note that \eqref{eq:Weyl-poly-1}
gives
\begin{equation*}
  (xy)^k x^j = x^j (xy + j)^k,
\end{equation*}
and \eqref{eq:Weyl-xmym} together with \eqref{eq:Weyl-poly-1} gives
\begin{equation*}
  x^i y^i x^j \overset{\eqref{eq:Weyl-xmym}}{=}
\prod_{\ell = 0}^{i - 1} (xy - \ell) x^j \overset{\eqref{eq:Weyl-poly-1}}{=} x^j \prod_{\ell = 0}^{i - 1} (xy - \ell + j).
\end{equation*}
Hence, to find the coefficients $a_i^k$ it is sufficient to consider
\begin{equation*}
  (xy + j)^k = \sum_{i = 0}^k a_n^k \prod_{\ell = 0}^{i - 1}(xy - \ell + j).
\end{equation*}
Comparing the constant terms on both the left-hand side and right-hand side,
we find
\begin{equation}
  \label{eq:Weyl-result-generating-function}
  j^k = \sum_{i = 0}^k a_i^k \prod_{\ell = 0}^{i - 1} (j - \ell) = \sum_{i = 0}^k a_i^k (j)_i,
\end{equation}
where $(j)_i$ is the falling factorial as in
\eqref{eq:falling-factorial}. Comparing
\eqref{eq:Weyl-result-generating-function} with the generating function
of the Stirling numbers of the second kind $\s{k}{i}$ as given in
\eqref{eq:Stirling-numbers-second-kind-generating-function-1}, we see that
 $a_i^k = \s{k}{i}$. This concludes the proof of
lemma \ref{lem:Weyl-expansion}.
\end{proof}

\section{The integral kernel of $L^N \e^{t L}$}\label{sec:mainresult}
As mentioned before, as a first step we would like to apply $D_N$ to the generating
function $g(x,t) := \e^{-2tx + t^2 } = \e^{-(x - t)^2 + x^2}$ for the Hermite polynomials
\eqref{eq:Generating-function-identity}. We first compute
the action of $\partial_t^N$ on the generating function.
\begin{lemma}
  We have
  \begin{equation}
    \label{eq:derivatives-generating-function-Hermite}
    \partial_t^N \e^{-(x - t)^2 + x^2}  = \e^{-(x - t)^2 + x^2} H_N(x - t).
  \end{equation}
\end{lemma}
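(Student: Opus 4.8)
The plan is to reduce the claim directly to Rodrigues's formula \eqref{eq:Hermite-Rodrigues} by a change of variables, avoiding any explicit differentiation of a product. First I would split off the $t$-independent factor by writing $\e^{-(x - t)^2 + x^2} = \e^{x^2}\,\e^{-(x - t)^2}$. Since $\e^{x^2}$ does not depend on $t$, it passes through $\partial_t^N$ unchanged, and the problem reduces to evaluating $\partial_t^N \e^{-(x - t)^2}$.

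Next I would introduce the variable $u := x - t$ and observe that, with $x$ held fixed, $\partial_t = -\partial_u$. Hence $\partial_t^N \e^{-u^2} = (-1)^N \partial_u^N \e^{-u^2}$. At this stage Rodrigues's formula \eqref{eq:Hermite-Rodrigues}, rearranged as $\partial_u^N \e^{-u^2} = (-1)^N \e^{-u^2} H_N(u)$, applies verbatim and supplies the Hermite polynomial.

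Combining the two steps, the two factors of $(-1)^N$ cancel, giving $\partial_t^N \e^{-u^2} = \e^{-u^2} H_N(u)$. Multiplying back by $\e^{x^2}$ and resubstituting $u = x - t$ then recovers the desired identity $\partial_t^N \e^{-(x - t)^2 + x^2} = \e^{-(x - t)^2 + x^2} H_N(x - t)$. I do not expect a genuine obstacle here; the only point that requires care is the sign bookkeeping in the chain rule $\partial_t = -\partial_u$, and it is exactly this pairing of signs that makes the two powers of $(-1)^N$ cancel and produces $H_N$ rather than $(-1)^N H_N$.
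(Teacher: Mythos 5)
Your proof is correct and is essentially the paper's argument: the paper first trades $\partial_t$ for $-\partial_x$ via $\partial_t \e^{-(x-t)^2} = -\partial_x \e^{-(x-t)^2}$ and only afterwards substitutes $y = x - t$ into Rodrigues's formula, whereas you substitute $u = x - t$ at the outset --- the same chain-rule observation in a slightly more streamlined order. Your sign bookkeeping, with the two factors of $(-1)^N$ cancelling to yield $H_N$ rather than $(-1)^N H_N$, matches the paper's computation exactly.
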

\begin{proof}
 We first note  that,
\begin{align*}
  \partial_t \e^{-(x - t)^2} &= 2(x - t) \e^{-(x - t)^2} = - \partial_x \e^{-(x - t)^2}.
\end{align*}
Using this we get
\begin{align*}
  \partial_t^N \e^{-(x - t)^2 + x^2} &= \e^{x^2} \partial_t^N \e^{-(x -
  t)^2}\\
  &= \e^{x^2} \partial_t^{N - 1} \partial_t \e^{-(x - t)^2}\\
  &= -\e^{x^2} \partial_t^{N - 1} \partial_x \e^{-(x - t)^2}\\
  &= (-)^2\e^{x^2} \partial_t^{N - 2} \partial_x^2 \e^{-(x - t)^2}\\
  &= \dots\\
  &= (-1)^N \e^{x^2} \partial_x^N \e^{-(x - t)^2}.
\end{align*}
By a change of variables,
\begin{equation*}
  \partial_t^N \e^{-(x - t)^2 + x^2} = (-1)^N \e^{-(x - t)^2 + x^2} \e^{(x - t)^2} \partial_y^N \e^{-y^2} \Bigr|_{y = x - t}.
\end{equation*}
Hence, by \eqref{eq:Hermite-Rodrigues},
\begin{equation*}
  \partial_t^N \e^{-(x - t)^2 + x^2} = \e^{-(x - t)^2 + x^2} H_N(x - t).
\end{equation*}
\end{proof}
\begin{lemma}
  \label{lem:power-Ornstein-Uhlenbeck-generating-Hermite}
  For all $x\in\R$ and $t>0$ we have
  \begin{equation}
    \label{eq:power-Ornstein-Uhlenbeck-generating-Hermite}
    L^N \e^{-(x - t)^2 + x^2} = (-1)^N \e^{-(x - t)^2 + x^2} \sum_{n = 0}^N \s{N}{n} t^n H_n(x - t).
  \end{equation}
\end{lemma}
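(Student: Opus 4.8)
The plan is to assemble the three ingredients collected immediately before the statement, with no new construction required. Writing $g(x,t) := \e^{-(x - t)^2 + x^2}$ for the generating function \eqref{eq:Generating-function-identity}, these are: the reduction $L^N g = (-1)^N D_N g$; the Stirling expansion of $D_N$ given by Lemma~\ref{lem:Lk-powers-expanded}; and the derivative identity \eqref{eq:derivatives-generating-function-Hermite}, namely $\partial_t^n g = g\, H_n(x - t)$. The substance of the argument lies in verifying that these pieces dovetail.

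First I would secure the reduction $L^N g = (-1)^N D_N g$. Since $L$ differentiates in $x$ only while $D_N = (t\partial_t)^N$ differentiates in $t$ only, the two operators commute. Expanding $g = \sum_{n \ge 0} H_n(x)\, t^n / n!$ and applying the eigenvalue identity $L H_n = -n H_n$ term by term gives $L g = -\sum_{n \ge 0} n\, H_n(x)\, t^n / n! = -t\partial_t g$; iterating and using commutativity then yields $L^N g = (-t\partial_t)^N g = (-1)^N D_N g$. The termwise manipulations are justified because the series is entire in $t$ for each fixed $x$.

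With this in hand, I would substitute the expansion \eqref{eq:Expanded-Differentiatial-operator-generated} of Lemma~\ref{lem:Lk-powers-expanded} and then the derivative identity \eqref{eq:derivatives-generating-function-Hermite} into each summand:
\begin{equation*}
  L^N g = (-1)^N D_N g = (-1)^N \sum_{n = 0}^N \s{N}{n}\, t^n\, \partial_t^n g = (-1)^N \sum_{n = 0}^N \s{N}{n}\, t^n\, g\, H_n(x - t).
\end{equation*}
Pulling the common factor $g = \e^{-(x - t)^2 + x^2}$ outside the sum produces exactly \eqref{eq:power-Ornstein-Uhlenbeck-generating-Hermite}.

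The only step calling for any care is the first one: making the identity $L^N g = (-1)^N D_N g$ rigorous rather than merely formal, that is, justifying the interchange of $L$ with $t\partial_t$ and the termwise application of $L$ to the generating series. Once that is in place, the remainder is a direct substitution of the two preceding lemmas and carries no genuine difficulty.
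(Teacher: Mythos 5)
Your proposal is correct and follows essentially the same route as the paper: the paper's proof likewise combines the reduction $L^N g = (-1)^N D_N g$ (asserted via $Lg = -t\partial_t g$ in Section~\ref{sec:setup}), the Stirling expansion of Lemma~\ref{lem:Lk-powers-expanded}, and the derivative identity \eqref{eq:derivatives-generating-function-Hermite}. Your only addition is an explicit justification of $Lg = -t\partial_t g$ via the termwise eigenvalue identity, which the paper states without proof (and your version even fixes a sign slip in the paper's first displayed line, where $(-1)^N$ is momentarily dropped).
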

\begin{proof}
This is now easy to prove. Recalling that $L = -t \partial_t$ and using \eqref{eq:Expanded-Differentiatial-operator-generated},
we get
\begin{align*}
  L^N \e^{-(x - t)^2 + x^2} &= D_N \e^{-(x - t)^2 + x^2}\\
  &= (-1)^N \sum_{n = 0}^N \s{N}{n} t^n \partial_t^n \e^{-(x - t)^2 + x^2}\\
  &= (-1)^N \e^{-(x - t)^2 + x^2} \sum_{n = 0}^N \s{N}{n} t^n H_n(x - t).
\end{align*}
\end{proof}
Our next theorem is the main result of this paper and provides an explicit expression for the
integral kernel of $L^N e^{t L}$.
\begin{theorem}\label{th:integral-kernel}
  Let $L$ be the Ornstein-Uhlenbeck operator on $L^2(\R^d,\dx \gamma)$,
let $t>0$, and let $N\ge 0$ be an integer. The integral
  kernel $M_t^N$ of $L^N \e^{tL}$  is given by
  \begin{equation}
    \label{eq:Mehler-kernel-of-powers}
  \begin{split}
  M_t^N(x, y) &= M_t(x, y) \sum_{|n| = N} \binom{N}{n_1, \dots, n_d} \prod_{i = 1}^d
  \sum_{m_i = 0}^{n_i} \sum_{\ell_i = 0}^{m_i}  2^{-m_i}
  \s{n_i}{m_i} \binom{m_i}{\ell_i}\\
  &\quad \times \biggl(-\frac{\e^{-t}}{\sqrt{1 - \e^{-2t}}} \biggr)^{2m_i - \ell_i}
H_{\ell_i}(x_i) H_{2m_i - \ell_i}\biggl(\frac{x_i \e^{-t} - y_i}{\sqrt{1 - \e^{-2t}}}\biggr).
  \end{split}
  \end{equation}
\end{theorem}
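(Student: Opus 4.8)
The plan is to reduce everything to one dimension and there compute the kernel by feeding the Mehler series into the generating function of the Hermite polynomials. By the tensorisation identity \eqref{eq:reduction-to-d1}, the $d$-dimensional kernel is the multinomial combination $M_t^N(x,y) = \sum_{|n|=N}\binom{N}{n_1,\dots,n_d}\prod_{i=1}^d M_t^{n_i}(x_i,y_i)$ of one-dimensional kernels $M_t^{n_i}$ of $L^{n_i}\e^{tL}$; since the $d$-dimensional Mehler kernel factorises as $\prod_i M_t(x_i,y_i)$, it suffices to establish the claimed product formula in dimension one. Here I would first record that, because $L_x h_k = -k h_k$, applying $L^n$ termwise to the series \eqref{eq:Mehler_kernel_non_comp} gives $M_t^n(x,y) = L_x^n M_t(x,y)$, so the whole problem becomes computing $L_x^n$ of the Mehler kernel.

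Next I would convert the Mehler series into a genuine generating function. Writing $M_t(x,y) = \sum_n \e^{-tn}\frac{H_n(x)H_n(y)}{2^n n!}$ and substituting the integral representation \eqref{eq:Hermite-integral} for $H_n(y)$ turns the factor $H_n(y)$ into the pure power $\xi^n$; the sum over $n$ then collapses, via \eqref{eq:Generating-function-identity}, to the generating function $g(x,w)=\e^{2wx-w^2}$ evaluated at $w=-i\e^{-t}\xi$. This yields the representation \[ M_t(x,y) = \frac{\e^{y^2}}{\sqrt\pi}\int_{-\infty}^\infty \e^{2iy\xi}\e^{-\xi^2}\,g(x,-i\e^{-t}\xi)\,\dx\xi, \] whose Gaussian integral can be checked to reproduce \eqref{eq:Mehler-kernel-oneformula}. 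With this in hand I would differentiate under the integral sign and invoke Lemma \ref{lem:power-Ornstein-Uhlenbeck-generating-Hermite}, which replaces $L_x^n g(x,w)$ by $(-1)^n g(x,w)\sum_{m}\s{n}{m} w^m H_m(x-w)$.

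Expanding $H_m(x-w)$ by the Hermite binomial identity \eqref{eq:Hermite-binomial-type} then separates the $x$-dependent polynomials $H_\ell(x)$ from the powers of $w$, i.e.\ of $\xi$. What is left are Gaussian moment integrals $\int \xi^{2m-\ell}\e^{2iy\xi}\e^{-\xi^2}g(x,-i\e^{-t}\xi)\,\dx\xi$; after completing the square and rescaling $\xi$, each of these is again of the form \eqref{eq:Hermite-integral} and therefore evaluates to the Mehler kernel $M_t(x,y)$ times a Hermite polynomial $H_{2m-\ell}\bigl(\frac{\e^{-t}x-y}{\sqrt{1-\e^{-2t}}}\bigr)$.

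The hard part will be the bookkeeping rather than the ideas: one must track the factors $(-2i)^{\pm(2m-\ell)}$, the powers of $2$ and of $\e^{-t}/\sqrt{1-\e^{-2t}}$, and the signs produced by the Hermite parity $H_k(-z)=(-1)^kH_k(z)$ when passing from argument $\frac{y-\e^{-t}x}{\sqrt{1-\e^{-2t}}}$ to $\frac{\e^{-t}x-y}{\sqrt{1-\e^{-2t}}}$, and check that all of these collapse into the single coefficient $2^{-m}\s{n}{m}\binom{m}{\ell}\bigl(-\frac{\e^{-t}}{\sqrt{1-\e^{-2t}}}\bigr)^{2m-\ell}$. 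Alongside this I would justify the two interchanges used above---differentiation under the integral and summation against the integral representation---both of which are routine consequences of the Gaussian decay in $\xi$. Reassembling the one-dimensional factors through the multinomial reduction then gives the stated $d$-dimensional formula \eqref{eq:Mehler-kernel-of-powers}.
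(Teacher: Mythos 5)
Your proposal is correct and follows essentially the same route as the paper's own proof: reduction to $d=1$ via \eqref{eq:reduction-to-d1}, conversion of the Mehler series \eqref{eq:Mehler_kernel_non_comp} into the generating function by the integral representation \eqref{eq:Hermite-integral}, application of Lemma \ref{lem:power-Ornstein-Uhlenbeck-generating-Hermite} under the integral at the complex argument $w=-i\e^{-t}\xi$ (the paper handles this by analytic continuation, which your justification of the interchanges should make explicit), expansion via \eqref{eq:Hermite-binomial-type}, and evaluation of the resulting Gaussian moment integrals again through \eqref{eq:Hermite-integral}. The bookkeeping of signs and constants you defer is precisely what the paper carries out to arrive at the one-dimensional formula \eqref{eq:Mehler-kernel-Lk-intermediate-1} before tensorising.
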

\begin{proof}
We first prove the result for $d = 1$.
Pulling $L^N$ through the integral expression \eqref{eq:Mehler} for $e^{t L}$ involving the Mehler kernel, we must
find a suitable expression for the kernel $M_t^N(\cdot, y) = L^N
M_t(\cdot, y)$.  Using \eqref{eq:Mehler_kernel_non_comp} and the normalization of $H_m$ in
\eqref{eq:Hermite-normalized} we get
\begin{align*}
  M_t^N(x, y) &= L^N \sum_{m = 0}^\infty \frac{\e^{-t m}}{m!}\frac1{2^m} H_m(x) H_m(y)\\
  &\overset{\eqref{eq:Hermite-integral}}{=} L^N \sum_{m = 0}^\infty \frac{\e^{-t m}}{m!}\frac1{2^m} H_m(x) \frac{(-2i)^m}{\sqrt{\pi}} \e^{y^2} \int_{-\infty}^\infty \e^{-\xi^2} \xi^m \e^{2 i y \xi} \D\xi\\
  &= L^N \frac{\e^{y^2}}{\sqrt{\pi}} \int_{-\infty}^\infty  \e^{-\xi^2} \e^{2 i y \xi} \sum_{m = 0}^\infty \frac1{m!} H_m(x) (-i \xi \e^{-t})^m\D\xi\\
  &\overset{\eqref{eq:Generating-function-identity}}{=} L^N
  \frac{\e^{y^2}}{\sqrt{\pi}} \int_{-\infty}^\infty \e^{-\xi^2} \e^{2
    i y \xi} \e^{-(x + i \xi \e^{-t})^2 + x^2} \D\xi.
\end{align*}
The operator $L^N$ is applied with respect to $x$ here,
so by lemma \ref{lem:power-Ornstein-Uhlenbeck-generating-Hermite} we get
\begin{align*}
  M_t^N(x, y) &= \frac{\e^{y^2}}{\sqrt{\pi}}
  \int_{-\infty}^\infty \e^{-\xi^2} \e^{2 i y \xi} L^N \e^{-(x + i \xi \e^{-t})^2 + x^2} \D\xi\\
  &\overset{\eqref{eq:power-Ornstein-Uhlenbeck-generating-Hermite}}{=}(-1)^N
  \frac{\e^{x^2 + y^2}}{\sqrt{\pi}} \sum_{m = 0}^N \s{N}{m}
  \int_{-\infty}^\infty \e^{2 i y \xi} \e^{-(x + i \xi \e^{-t})^2} (i
  \xi \e^{-t})^m H_m(y + i \xi \e^{-t}) \e^{-\xi^2} \D\xi,
\end{align*}
where in last line we have used the analytic continuation of the algebraic
identity \eqref{eq:power-Ornstein-Uhlenbeck-generating-Hermite}. Similarly we can expand $H_m(y + i \xi \e^{-t})$ using
\eqref{eq:Hermite-binomial-type}. This gives
\begin{equation*}
  H_m(y + i \xi \e^{-t}) = \sum_{\ell = 0}^m \binom{m}{\ell} H_\ell(y)
  (2 i \xi \e^{-t})^{m - \ell},
\end{equation*}
so that $M_t^N$ can be written as
\begin{equation}
  \label{eq:M_k-integral}
(-1)^N  \frac{\e^{x^2 + y^2}}{\sqrt{\pi}} \sum_{m = 0}^N \sum_{\ell = 0}^m \s{N}{m} \binom{m}{\ell} H_\ell(y) 2^{m - \ell} \int_{-\infty}^\infty \e^{2 i y \xi - \xi^2} \e^{-(x + i \xi \e^{-t})^2}  (i \xi \e^{-t})^{2m - \ell} \D\xi.
\end{equation}
If we set $M = 2m - \ell$, this reduces our task to computing the
integral
\begin{align}
  \notag
  \frac{\e^{x^2 + y^2}}{\sqrt{\pi}} \int_{-\infty}^\infty &\e^{2 i y \xi - \xi^2} \e^{-(x + i \xi \e^{-t})^2}  (i \xi \e^{-t})^M \D{\xi}\\
  \label{eq:Hermite-integral-derivation-1}
  &= \frac{\e^{y^2}}{\sqrt{\pi}} \int_{-\infty}^\infty \e^{2 i (x \e^{-t} - y) \xi} \e^{-(1 - \e^{-2 t}) \xi^2} (i \xi \e^{-t})^M
  \D{\xi}.
\end{align}
To make the computation less convolved, let us set
\begin{equation*}
  \alpha_t := \sqrt{1 - \e^{-2t}}, \text{ and, } \beta_t(x, y) := \frac{x \e^{-t} - y}{\sqrt{1 - \e^{-2t}}}.
\end{equation*}
This allows us to write the exponential in the integral \eqref{eq:Hermite-integral-derivation-1} as
\begin{equation*}
  \e^{2 i (x\e^{-t} - y) \xi} \e^{-(1 - \e^{-2 t}) \xi^2} = \e^{2 i \alpha_t \beta_t(x,y) \xi} \e^{-\alpha_t^2 \xi^2}.
\end{equation*}
This reduces the problem, after the substitution $\alpha_t \xi \to \xi$,
to computing the integral
\begin{equation*}
  \frac{\e^{y^2}}{\sqrt{\pi}} \frac{i^M \e^{-M t}}{\alpha_t^{M + 1}}
  \int_{-\infty}^\infty \e^{2 i \beta_t(x, y) \xi} \e^{-\xi^2} \xi^M \D{\xi}.
\end{equation*}
The final integral is an integral expression for the
Hermite polynomials \eqref{eq:Hermite-integral}, so
\begin{align*}
\ &   \frac{\e^{y^2}}{\sqrt{\pi}} \frac{i^M \e^{-M t}}{\alpha_t^{M + 1}}
  \int_{-\infty}^\infty \e^{2 i \beta_t(x,y) \xi} \e^{-\xi^2} \xi^M \D{\xi}
\\ & \qquad  \overset{\eqref{eq:Hermite-integral}}{=} \e^{y^2-\beta_t(x,y)^2}
  \frac1{\alpha_t^{M + 1}} \frac{(-1)^M \e^{-M t}}{2^M} H_M(\beta_t(x,y)).
\end{align*}
Next we note that $\exp(y^2 - \beta_t(x,y)^2) \alpha_t^{-1} = M_t$, the
Mehler kernel from \eqref{eq:Mehler}. Hence,
\begin{equation}
  \label{eq:Mehler-kernel-Lk-intermediate-1}
  \begin{split}
  M_t^N(x, y) &= M_t(x, y) \sum_{m = 0}^N \sum_{\ell = 0}^m
  \binom{m}{\ell} \s{N}{m}
 \biggl(-\frac{\e^{-t}}{\sqrt{1 - \e^{-2t}}} \biggr)^{2m - \ell}
  2^{-m}\\
  &\quad \times H_\ell(x) H_{2m - \ell}\biggl(\frac{x \e^{-t} - y}{\sqrt{1 - \e^{-2t}}}\biggr).
  \end{split}
\end{equation}
Applying \eqref{eq:reduction-to-d1} we get the result in $d$ dimensions.
\end{proof}

\section{An application}\label{sec:application}
As an application of our main result, in this section we give an alternative proof of the
bounds on the kernels $K$ and $\tilde K$ of
\cite{Portal2014} (see the definition below), making the dependence on the parameters more explicit.
These kernels play a central role in the study of the Hardy space $H^1(\R^d,\dx \gamma)$ in \cite{Portal2014},
where the standard Calder\'on reproducing formula is replaced by
\begin{equation}
 u = C \int_0^\infty (t^2 L)^{N + 1} \e^{\frac{t^2}{\alpha} L} u \frac{\D{t}}{t} + \int_{\R^d} u \D\gamma,
\end{equation}
where $C$ is a suitable constant only depending on $N$ and
$\alpha$ (this can be seen by letting $u$ be a Hermite polynomial).
The kernels $K$ and $\tilde K$ then occur in several decompositions, and the estimates below allow
them to be related to classical results about the Mehler kernel.

\begin{definition}
  We define the kernels $K$ and $\tilde K$ by
  \begin{align*}
    \int_{\R^d} K_{t^2, N, \alpha}(x, y) u(y) \D\gamma(y) &= (t^2 L)^N
                                                            \e^{\frac{t^2}{\alpha}
                                                            L} u(x),\\
    \int_{\R^d} \tilde{K}_{t^2, N, \alpha, j}(x, y) u(y) \D\gamma(y) &= (t^2 L)^N
                                                            \e^{\frac{t^2}{\alpha}
                                                            L} t \partial_{x_j}^* u(x).
  \end{align*}
\end{definition}
Note that the operators on the right-hand sides are indeed given by integral kernels: the first is a scaled version
of the operator we have already been studying, and a
duality argument implies that the second is given by the integral kernel \[\tilde{K}_{t^2, N, \alpha,j}(x, y) =
t \partial_{x_j} K_{t^2, N, \alpha}(x, y).\]
Thus, both kernels are given as appropriate derivatives of the Mehler kernel.

We begin with a technical lemma which is a rephrased version of
\cite[Lemma 3.4]{Portal2014}. One should take note that we define the kernels with respect
to the Gaussian measure whereas, \cite{Portal2014} defines these with
respect to the Lebesgue measure.
\begin{lemma}
  \label{lem:Mehler-alpha-efficient1}
  For all $\alpha > 1$ and all $t>0$ and $x, y$ in $\R^d$ we have
  \begin{equation}
    \label{eq:Mehler-alpha-efficient2}
   \frac{|\e^{-\frac{t}{\alpha}}x - y|^2}{1 - e^{-2\frac{t}{\alpha}}}
   \geq\frac{\alpha}2 \e^{-2t}  \frac{|\e^{-t}x - y|^2}{1 -
            \e^{-2t}} - \frac{t^2 \min{(|x|^2, |y|^2)}}{1 - \e^{-2\frac{t}{\alpha}}}.
  \end{equation}
  Additionally, we have
    \begin{equation}
    \label{eq:Mehler-alpha-efficient3}
    \alpha \e^{-2t} \leq \frac{1 - \e^{-2t}}{1 - \e^{-2\frac{t}{\alpha}}} \leq \alpha.
  \end{equation}
\end{lemma}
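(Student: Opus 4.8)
The plan is to prove the two displayed inequalities in the reverse of the order stated: I would establish the elementary two-sided bound \eqref{eq:Mehler-alpha-efficient3} first, and then feed its lower half into the proof of \eqref{eq:Mehler-alpha-efficient2}. Throughout I abbreviate $a := \e^{-t}$ and $b := \e^{-t/\alpha}$, noting that $\alpha>1$ and $t>0$ force $0<a<b<1$ together with the relation $a=b^{\alpha}$, so that $\e^{-2t}=a^{2}$ and $\e^{-2t/\alpha}=b^{2}$.

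For \eqref{eq:Mehler-alpha-efficient3}, the substitution $q:=\e^{-2t/\alpha}=b^{2}\in(0,1)$ turns the claim into the purely algebraic statement $\alpha q^{\alpha}\le \frac{1-q^{\alpha}}{1-q}\le\alpha$, since $\e^{-2t}=q^{\alpha}$. Clearing denominators, the right-hand inequality is equivalent to $G(q):=\alpha q-q^{\alpha}\le\alpha-1$ and the left-hand one to $F(q):=(\alpha+1)q^{\alpha}-\alpha q^{\alpha+1}\le1$. A one-line differentiation gives $G'(q)=\alpha\bigl(1-q^{\alpha-1}\bigr)\ge0$ and $F'(q)=\alpha(\alpha+1)q^{\alpha-1}(1-q)\ge0$ on $(0,1)$, so both $F$ and $G$ are increasing there; evaluating at $q=1$ yields $G(1)=\alpha-1$ and $F(1)=1$, which is exactly what is needed. (Alternatively the upper bound is immediate from the concavity of $u\mapsto1-\e^{-u}$, but the monotonicity argument handles both halves uniformly.)

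For \eqref{eq:Mehler-alpha-efficient2}, I would first clear the positive factor $1-b^{2}$ and collect the $t^{2}$-term on the left, rewriting the claim in the equivalent form
\[
  |bx-y|^{2}+t^{2}\min\bigl(|x|^{2},|y|^{2}\bigr)
  \ \ge\ \frac{\alpha a^{2}(1-b^{2})}{2(1-a^{2})}\,|ax-y|^{2}.
\]
The lower bound in \eqref{eq:Mehler-alpha-efficient3}, namely $\alpha a^{2}\le(1-a^{2})/(1-b^{2})$, shows that the coefficient on the right is at most $\tfrac12$; this is precisely the role of the factor $\tfrac\alpha2\e^{-2t}$ in the statement. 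Hence it suffices to prove the clean \emph{core} inequality
\[
  \tfrac12|ax-y|^{2}\ \le\ |bx-y|^{2}+t^{2}\min\bigl(|x|^{2},|y|^{2}\bigr).
\]

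The core inequality is where the real work, and the main obstacle, lies, because the $\min$ must be produced without an awkward case distinction. My idea is to write the vector $ax-y$ in two different ways and apply $|u+v|^{2}\le 2|u|^{2}+2|v|^{2}$ to each. The splitting $ax-y=(bx-y)+(a-b)x$ gives $\tfrac12|ax-y|^{2}\le|bx-y|^{2}+(b-a)^{2}|x|^{2}$, while $ax-y=\tfrac{a}{b}(bx-y)+\bigl(\tfrac{a}{b}-1\bigr)y$ gives $\tfrac12|ax-y|^{2}\le|bx-y|^{2}+\bigl(1-\tfrac{a}{b}\bigr)^{2}|y|^{2}$, where in the second I also use $a^{2}/b^{2}<1$. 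It then remains to check the two scalar estimates $b-a=\e^{-t/\alpha}-\e^{-t}=\int_{t/\alpha}^{t}\e^{-u}\D u\le t-t/\alpha\le t$ (bounding the integrand by $1$) and $1-\tfrac{a}{b}=1-\e^{-t(1-1/\alpha)}\le t(1-1/\alpha)\le t$ (from $1-\e^{-u}\le u$). Thus the two splittings yield bounds of the form $|bx-y|^{2}+t^{2}|x|^{2}$ and $|bx-y|^{2}+t^{2}|y|^{2}$ respectively, and since both hold simultaneously, taking whichever right-hand side is smaller produces the $\min$ and closes the argument. Finally, the $d$-dimensional statement needs no extra work, since every quantity above is already expressed through $|\cdot|$ and $\langle\cdot,\cdot\rangle$ on $\R^{d}$.
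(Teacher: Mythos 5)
Your argument is correct in all its steps, but a comparison with the paper's proof is not really possible: the paper does not prove Lemma~\ref{lem:Mehler-alpha-efficient1} at all, stating it as ``a rephrased version of \cite[Lemma 3.4]{Portal2014}'' and delegating the verification to that reference (with the caveat, noted in the text, that the kernels there are taken with respect to Lebesgue rather than Gaussian measure). Your proposal therefore supplies a self-contained proof where the paper offers only a citation, and I have checked its three ingredients. For \eqref{eq:Mehler-alpha-efficient3}, with $q=\e^{-2t/\alpha}\in(0,1)$ the two halves become $F(q):=(\alpha+1)q^{\alpha}-\alpha q^{\alpha+1}\le 1$ and $G(q):=\alpha q-q^{\alpha}\le\alpha-1$, and your computations $F'(q)=\alpha(\alpha+1)q^{\alpha-1}(1-q)\ge 0$ and $G'(q)=\alpha(1-q^{\alpha-1})\ge 0$ (the latter using $\alpha>1$), together with $F(1)=1$, $G(1)=\alpha-1$, do close both. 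The reduction of \eqref{eq:Mehler-alpha-efficient2} is sound: after multiplying through by $1-\e^{-2t/\alpha}>0$, the lower half of \eqref{eq:Mehler-alpha-efficient3} shows the coefficient $\frac{\alpha\e^{-2t}(1-\e^{-2t/\alpha})}{2(1-\e^{-2t})}$ is at most $\tfrac12$, so the core inequality
\begin{equation*}
  \tfrac12|ax-y|^{2}\le|bx-y|^{2}+t^{2}\min\bigl(|x|^{2},|y|^{2}\bigr),
  \qquad a=\e^{-t},\ b=\e^{-t/\alpha},
\end{equation*}
indeed suffices. Your device for producing the $\min$ without a case distinction --- the two decompositions $ax-y=(bx-y)+(a-b)x$ and $ax-y=\tfrac ab(bx-y)+\bigl(\tfrac ab-1\bigr)y$, each followed by $|u+v|^{2}\le 2|u|^{2}+2|v|^{2}$, with the second correctly using $a^{2}/b^{2}<1$ to discard that factor --- is the nicest part of the argument, and the scalar bounds $b-a\le t(1-1/\alpha)\le t$ and $1-a/b\le t(1-1/\alpha)\le t$ are both valid. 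In fact your proof gives slightly more than stated, since one could keep $t^{2}(1-1/\alpha)^{2}$ in place of $t^{2}$.
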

\begin{theorem}
  Let $N$ be a positive integer, $0 < t < T$. The for all large enough $\alpha>1$
  we have
  \begin{enumerate}
  \item If $t |x| \leq C$, then \[\displaystyle |K_{t^2, \alpha, N}(x, y)| \lesssim_{T, N}
  \alpha \exp(\frac{\alpha}2C^2)
  M_{t^2}(x, y)  \exp\biggl(-\frac{\alpha}{8\e^{2T}}  \frac{|\e^{-t^2}x - y|^2}{1 -
            \e^{-2t^2}} \biggr).\]
  \item If $t |x| \leq C$, then
  \[\displaystyle |\tilde{K}_{t^2, \alpha, N,j}(x, y)|
  \lesssim_{T, N} \alpha \exp(\frac{\alpha}2C^2) M_{t^2}(x, y) \exp\biggl(-\frac{\alpha}{8 \e^{2T}}  \frac{|\e^{-t^2}x - y|^2}{1 -
            \e^{-2t^2}} \biggr).\]
  \end{enumerate}
\end{theorem}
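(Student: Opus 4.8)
The plan is to feed the explicit formula of Theorem~\ref{th:integral-kernel} into the definition of $K$ and then transport the Gaussian of the Mehler kernel from the semigroup time $t^2/\alpha$ to the time $t^2$ by means of Lemma~\ref{lem:Mehler-alpha-efficient1}. Writing $s:=t^2/\alpha$ and $\alpha_s:=\sqrt{1-\e^{-2s}}$, one has $K_{t^2,\alpha,N}=(t^2)^N M_s^N$, and Theorem~\ref{th:integral-kernel} exhibits this as $M_s(x,y)$ times a finite sum over multi-indices of products, over the coordinates $i$, of a coefficient $2^{-m_i}\s{n_i}{m_i}\binom{m_i}{\ell_i}(-\e^{-s}/\alpha_s)^{2m_i-\ell_i}$ and a factor $H_{\ell_i}(x_i)\,H_{2m_i-\ell_i}(\beta_{s,i})$, where $\beta_{s,i}=(x_i\e^{-s}-y_i)/\alpha_s$ and $|\beta_{t^2}|^2=|\e^{-t^2}x-y|^2/(1-\e^{-2t^2})$. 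First I would isolate the Gaussian $\exp(-|\beta_s|^2)$ sitting in $M_s$ and use it to neutralise the $\beta$-dependent Hermite factors: the Hermite-function sup-bound $|H_M(\beta)|\,\e^{-\beta^2/2}\le C_M$ gives $|H_{2m_i-\ell_i}(\beta_{s,i})|\,\e^{-\beta_{s,i}^2}\le C\,\e^{-\beta_{s,i}^2/2}$, so that half of the Gaussian survives, while the factors $H_{\ell_i}(x_i)$ are handled crudely through the hypothesis $t|x|\le C$ via $|H_{\ell_i}(x_i)|\lesssim(1+|x_i|)^{\ell_i}\lesssim(1+C/t)^{\ell_i}$.

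Next comes the transport step. Applying inequality~\eqref{eq:Mehler-alpha-efficient2} (with its variable taken equal to $t^2$, since the semigroup time is $t^2/\alpha$) to the surviving $\exp(-\tfrac12|\beta_s|^2)$ turns it into a Gaussian of the form $\exp(-\tfrac{\alpha}{4}\e^{-2t^2}|\beta_{t^2}|^2)$ up to the error term $\tfrac12 t^4\min(|x|^2,|y|^2)/(1-\e^{-2s})$; bounding $\min\le|x|^2$, using $t^2|x|^2\le C^2$, and estimating $t^2/(1-\e^{-2s})\le\tfrac{\alpha}{2}(1+o(1))$ from \eqref{eq:Mehler-alpha-efficient3}, this error produces precisely the prefactor $\exp(\tfrac{\alpha}{2}C^2)$. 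Retaining only a fraction of the decay so as to match the Mehler kernel $M_{t^2}$ (whose own Gaussian is $\exp(-|\beta_{t^2}|^2)$), bounding $\e^{-2t^2}$ from below by the constant $\e^{-2T}$ that appears in the statement, and using \eqref{eq:Mehler-alpha-efficient3} once more for the ratio $\alpha_{t^2}^d/\alpha_s^d$ of normalising constants, I arrive at a bound of the announced shape $M_{t^2}(x,y)\exp(\tfrac{\alpha}{2}C^2)\exp(-\tfrac{\alpha}{8\e^{2T}}|\beta_{t^2}|^2)$ multiplied by a leftover algebraic factor.

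The main obstacle is the control of that algebraic factor. The coefficients $\alpha_s^{-(2m_i-\ell_i)}$ blow up like $t^{-(2m_i-\ell_i)}$ as $t\to0$ and the bound for $H_{\ell_i}(x_i)$ contributes $t^{-\ell_i}$; but since $\sum_i[(2m_i-\ell_i)+\ell_i]=2\sum_i m_i\le 2N$, all these negative powers are exactly swallowed by the prefactor $(t^2)^N$, and this degree bookkeeping is what renders the estimate uniform as $t\to0$. The genuinely delicate point is the dependence on $\alpha$: the same coefficients carry factors $\alpha^{(2m_i-\ell_i)/2}$ while $\alpha_s^{-d}$ carries $\alpha^{d/2}$, so a naive term-by-term estimate overshoots the linear factor $\alpha$. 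To recover it one must \emph{not} split the Mehler Gaussian evenly but keep enough of $\exp(-|\beta_s|^2)$ to damp these powers: where $|\beta_{t^2}|$ is bounded away from zero the retained decay $\exp(-c\alpha|\beta_{t^2}|^2)$ beats any fixed power of $\alpha$, whereas in the complementary regime $y\approx x\e^{-t^2}$ one computes $|\beta_s|^2\approx\tfrac{\alpha}{2}(t|x|)^2$, so that $\exp(-|\beta_s|^2)$ supplies the damping there. Balancing the two regimes — concretely, exploiting that $b^{2m}\e^{-\tfrac{\alpha}{2}b^2}$ is maximal at $b=t|x|$ of order $\alpha^{-1/2}$ — is the heart of the matter.

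Finally, for $\tilde K$ I would use $\tilde K_{t^2,\alpha,N,j}=t\,\partial_{x_j}K_{t^2,\alpha,N}$ and differentiate the explicit formula. By $H_n'=2nH_{n-1}$ and $\partial_{x_j}\beta_{s,j}=\e^{-s}/\alpha_s$, together with $\partial_{x_j}$ acting on the Gaussian of $M_s$ (which merely reproduces a further $\beta$-Hermite factor), each differentiated term is again of the form treated above. The extra factor $t\,\e^{-s}/\alpha_s\sim\sqrt{\alpha}$ and the unit shift in degree are reabsorbed by the very same bookkeeping, so the estimate for $\tilde K$ follows with the identical right-hand side.
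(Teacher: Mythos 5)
Your proposal follows the paper's route step for step: insert the explicit kernel of Theorem~\ref{th:integral-kernel} at semigroup time $s:=t^2/\alpha$, split the Gaussian of $M_{s}$ so that part of it neutralises the $\beta$-Hermite factors via $\sup_\beta |H_M(\beta)|\e^{-\beta^2/2}<\infty$, transport the surviving half to time $t^2$ with \eqref{eq:Mehler-alpha-efficient2} and \eqref{eq:Mehler-alpha-efficient3} (the error term yielding $\exp(\frac{\alpha}{2}C^2)$ through $t|x|\le C$ and $1-\e^{-2s}\gtrsim t^2/\alpha$), and obtain $\tilde K$ by differentiating the explicit formula using $H_n'=2nH_{n-1}$. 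Where you genuinely diverge is in the bookkeeping, and there you are \emph{more} careful than the paper: the paper's step asserting $\bigl(|\e^{-s}x_i-y_i|/(1-\e^{-2s})\bigr)^{M_i}\exp\bigl(-\tfrac12\beta_{s,i}^2\bigr)\lesssim 1$ silently discards a factor $(1-\e^{-2s})^{-M_i/2}\sim(\alpha/t^2)^{M_i/2}$, whose $t$-part must be --- and in your write-up explicitly is --- cancelled against the prefactor $(t^2)^N$ via $\sum_i 2m_i\le 2N$; likewise you account for the term where $\partial_{x_j}$ hits the Mehler factor itself, which the paper's treatment of $\tilde K$ passes over. One caveat about your ``heart of the matter'': the two-regime balancing cannot fully remove the residual $\alpha^{\sum_i m_i}\le\alpha^N$, because at $\beta_s=0$ (i.e.\ $y=x\e^{-s}$) the even-degree values $H_{2m_i}(0)\ne 0$ make those powers genuinely present while no Gaussian decay is available there (also, $b^{2m}\e^{-\alpha b^2/2}$ is maximised at $b\sim\sqrt{2m/\alpha}$, not at $b=t|x|$). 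The simpler and sufficient move --- implicitly the paper's as well --- is to absorb $\alpha^N$, the $\alpha^{d/2}$ from the normalising ratio, and the $\sqrt{\alpha}$ from $t\e^{-s}/\alpha_s$ in the $\tilde K$ case, into the allowance $\exp(\frac{\alpha}{2}C^2)$ at the price of a constant depending on the fixed $C>0$; this dependence is tacitly permitted (the paper itself writes $\lesssim_{C,N,T}$ at the analogous step) and is unavoidable, since at $x=y=0$ the kernel has size of order $\alpha^{N+d/2}M_{t^2}(0,0)$, exceeding the stated bound unless the exponential allowance is used.
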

\begin{proof}
For $K_{t^2, \alpha, N}$, we use Theorem~\ref{th:integral-kernel} to obtain,
after taking absolute values,
\begin{align*}
  |K_{t^2, \alpha, N}(x, y)| &\leq  M_{\frac{t^2}{\alpha}}(x, y) \sum_{|k| = N} \binom{N}{n_1, \dots, n_d}
  \prod_{i = 1}^d  t^{2k_i }\sum_{\ell_i = 0}^{n_i} \sum_{m_i = 0}^{m_i} 2^{-m_i}
  \binom{m_i}{\ell_i}  \s{n_i}{m_i}\\
  &\quad \times \biggl(\frac{\e^{-\frac{t^2}{\alpha}}}{\sqrt{1 - \e^{-2\frac{t^2}{\alpha}}}} \biggr)^{2m_i - \ell_i}
| H_{\ell_i}(x_i)| \biggl| H_{2m_i - \ell_i}\biggl(\frac{x_i \e^{-\frac{t^2}{\alpha}} - y_i}{\sqrt{1 - \e^{-2\frac{t^2}{\alpha}}}}\biggr) \biggr |.
\end{align*}
Recalling that $\ell_1 + \dots + \ell_d \leq N$, using the assumptions $t \leq T$ and $t |x| \leq
C$ we can bound $t^{2k_i} |H_{\ell_i}(x)|$
by considering the highest order
term to obtain \[t^{2k_i} |H_{\ell_i}(x)| \lesssim_{C,N,T} 1.\]
Using \eqref{eq:Mehler-alpha-efficient3} we proceed by looking at
\begin{align*}
  M_{\frac{t^2}{\alpha}}(x, y) &= M_{t^2}(x, y) \bigg(\frac{{1 - \e^{-2t^2}}}{{1 - \e^{-2\frac{t^2}{\alpha}}}}\bigg)^{1/2}\exp\biggl(\frac{|\e^{-t^2} x - y|^2}{1 - \e^{-2t^2}}
  \biggr) \exp\biggl(-\frac{|\e^{-\frac{t^2}{\alpha}} x - y|^2}{1 - \e^{-2\frac{t^2}{\alpha}}}
  \biggr)\\
&\le \alpha M_{t^2}(x, y) \exp\biggl(\frac{|\e^{-t^2} x - y|^2}{1 - \e^{-2t^2}}
  \biggr) \Bigg[\exp\biggl(-\frac12\frac{|\e^{-\frac{t^2}{\alpha}} x - y|^2}{1 - \e^{-2\frac{t^2}{\alpha}}}
  \biggr) \bigg]^2.
\end{align*}
We can now bound the final Hermite polynomial in the expression of the
kernel. Setting $M_i = 2m_i - \ell_i$ we get
\begin{align*}
 \biggl(\frac{\e^{-\frac{t^2}{\alpha}}}{\sqrt{1 - \e^{-2\frac{t^2}{\alpha}}}} \biggr)^{M_i}
 \biggl|H_{M_i}\biggl(\frac{\e^{-\frac{t^2}{\alpha}}x_i - y_i}{\sqrt{1 -
  \e^{-2\frac{t^2}{\alpha}}}}\biggr)\biggr| &\lesssim_N
                                    \biggl(\frac{\e^{-\frac{t^2}{\alpha}}}{\sqrt{1
                                    - \e^{-2\frac{t^2}{\alpha}}}} \biggr)^{M_i}
                                    \biggl(\frac{|\e^{-\frac{t^2}{\alpha}}x_i - y_i|}{\sqrt{1 -
  \e^{-2\frac{t^2}{\alpha}}}}\biggr)^{M_i}\\
&\le \biggl(\frac{|\e^{-\frac{t^2}{\alpha}}x_i - y_i|}{{1 -
  \e^{-2\frac{t^2}{\alpha}}}}\biggr)^{M_i}.
\end{align*}
Also,
\begin{align*}
 \biggl(\frac{|\e^{-\frac{t^2}{\alpha}}x_i - y_i|}{{1 -
  \e^{-2\frac{t^2}{\alpha}}}}\biggr)^{M_i} \exp\biggl(-\frac1{2}\frac{|\e^{-\frac{t^2}{\alpha}} x_i - y_i|^2}{1 - \e^{-2\frac{t^2}{\alpha}}}
  \biggr) \lesssim 1.
\end{align*}
Putting these estimates together, using Lemma~\ref{lem:Mehler-alpha-efficient1}, and taking
$\alpha>1$  so large that
\[
1-\frac{\alpha}{4 \e^{2T}} \le -\frac{\alpha}{8 \e^{2T}} \ \hbox{ and } \
1 - \e^{-2\frac{t^2}{\alpha}} \ge \frac{t^2}{\alpha},
\] we obtain
\begin{align*}
  |K_{t^2, \alpha, N}(x, y)| & \lesssim_{T, N}\alpha M_{t^2}(x, y) \exp\biggl(\frac{|\e^{-t^2} x - y|^2}{1 - \e^{-2t^2}}
  \biggr) \exp\biggl(-\frac12\frac{|\e^{-\frac{t^2}{\alpha}} x - y|^2}{1 - \e^{-2\frac{t^2}{\alpha}}}
  \biggr)\\
&\leq \alpha M_{t^2}(x, y) \exp\biggl(\frac{|\e^{-t^2} x - y|^2}{1 - \e^{-2t^2}}
  \biggr) \exp\biggl(-\frac{\alpha}{4 \e^{2T}}  \frac{|\e^{-t^2}x - y|^2}{1 -
            \e^{-2t^2}} \biggr) \exp\biggl(\frac12 \frac{t^4 |x|^2}{1 - \e^{-2\frac{t^2}{\alpha}}} \biggr)\\
&= \alpha M_{t^2}(x, y) \exp\biggl(\Big(1-\frac{\alpha}{4 \e^{2T}}\Big)  \frac{|\e^{-t^2}x - y|^2}{1 -
            \e^{-2t^2}} \biggr) \exp\biggl(\frac12 \frac{t^4 |x|^2}{1 - \e^{-2\frac{t^2}{\alpha}}} \biggr)\\
            &\leq \alpha M_{t^2}(x, y)  \exp\biggl(-\frac{\alpha}{8 \e^{2T}}  \frac{|\e^{-t^2}x - y|^2}{1 -
            \e^{-2t^2}} \biggr) \exp\biggl(\frac\alpha2 t^2 |x|^2 \biggr)
            \\  &\leq \alpha \exp\biggl(\frac\alpha2 C^2 \biggr) M_{t^2}(x, y)
            \exp\biggl(-\frac{\alpha}{8 \e^{2T}}  \frac{|\e^{-t^2}x - y|^2}{1 -
            \e^{-2t^2}} \biggr),
\end{align*}
using $t|x| \leq C$ in the last step.

For the bound on $\tilde{K}$ we consider
\begin{align*}
  t \partial_{x_i} \biggl[H_{\ell_i}(x_i) H_{m_i} \biggl(\frac{x_i \e^{-t} - y_i}{\sqrt{1
  - \e^{-2t}}} \biggr) \biggr] &=  t H_{m_i} \biggl(\frac{x_i \e^{-t} - y_i}{\sqrt{1
  - \e^{-2t}}} \biggr) \partial_{x_i} H_\ell(x_i)\\
  &\quad + H_{\ell_i}(x_i) t \partial_{x_i}  H_{m_i} \biggl(\frac{x_i \e^{-t} - y_i}{\sqrt{1
  - \e^{-2t}}} \biggr).
\end{align*}
So, as the first term on the right-hand side just decreases in degree
we look at
\begin{align*}
  t \partial_{x_i}  \biggl(\frac{x_i \e^{-t} - y_i}{\sqrt{1 - \e^{-2t}}}
  \biggr)^{m_i} =  m_i \biggl(\frac{x_i \e^{-t} - y_i}{\sqrt{1 - \e^{-2t}}}
  \biggr)^{m_i - 1} t \frac{\e^{-t}}{\sqrt{1 - \e^{-2t}}}
\end{align*}
The last term is bounded as $t \downarrow 0$, and the rest of the
proof is as before.
\end{proof}

\subsection*{Acknowledgments}
This work was partially supported by NWO-VICI grant 639.033.604 of the Netherlands Organisation for Scientific Research (NWO).

The author wishes to thank Alex Amenta and Mikko Kemppainen for inspiring discussions.

\bibliographystyle{plain}
\bibliography{library}

\end{document}